\colorlet{darkishRed}{red!80!black}
\colorlet{darkishBlue}{blue!60!black}
\colorlet{darkishGreen}{green!60!black}
\renewcommand{\PrintDOI}[1]{\doi{#1}}
\let\setminus=\smallsetminus
\let\setminus=\smallsetminus
\newcommand{\invLim}{\varprojlim}
\newcommand{\rest}{\upharpoonright}
\renewcommand{\subset}{\subseteq}
\renewcommand{\supset}{\supseteq}
\newcommand{ \N } { \mathbb{N} }
\newcommand{ \Q } { \mathbb{Q} }
\def\calCommandfactory#1{%
   \expandafter\def\csname c#1\endcsname{\mathcal{#1}}}
\def\frakCommandfactory#1{%
   \expandafter\def\csname frak#1\endcsname{\mathfrak{#1}}}
\newcounter{ctr}
  \edef\X{\@Alph\c@ctr}
  \edef\Y{\@alph\c@ctr}
\renewcommand{\cP}{\mathscr{P}}
\def\lowfwd #1#2#3{{\mathop{\kern0pt #1}\limits^{\kern#2pt\raise.#3ex
\vbox to 0pt{\hbox{$\scriptscriptstyle\rightarrow$}\vss}}}}
\def\lowbkwd #1#2#3{{\mathop{\kern0pt #1}\limits^{\kern#2pt\raise.#3ex
\vbox to 0pt{\hbox{$\scriptscriptstyle\leftarrow$}\vss}}}}
\def\ve{\kern-1.5pt\lowfwd e{1.5}2\kern-1pt}
\def\ev{\kern-1pt\lowbkwd e{0.5}2\kern-1pt}
\def\vf{\kern-2pt\lowfwd f{2.5}2\kern-1pt}
\newtheorem{theorem}{Theorem}[section] 
\newtheorem{proposition}[theorem]{Proposition}
\newtheorem{lemma}[theorem]{Lemma}
\newenvironment{customthm}[1]
  {\innercustomthm}
  {\endinnercustomthm}
\theoremstyle{definition}
\newtheorem{example}[theorem]{Example}
\newtheorem{definition}[theorem]{Definition}
\theoremstyle{remark}
\newtheorem*{ack}{Acknowledgement}
\renewcommand{\cP}{\mathcal{P}}
\newcommand{\iecon}{infi\-nite\-ly edge-con\-nec\-ted}
\newcommand{\rminor}{\succcurlyeq}
\newcommand{\FG}{F}
\newcommand{\hFG}{\breve{F}}
\def\Pigraph{$\Pi$-graph}
\newcommand{\lin}{\normalfont\text{lin}}
\begin{document}
\title[The Farey graph is uniquely determined by its connectivity]{The Farey graph is uniquely determined\\by its connectivity}
\author{Jan Kurkofka}
\address{Universität Hamburg, Department of Mathematics, Bundesstraße 55 (Geomatikum), 20146 Hamburg, Germany}
\email{jan.kurkofka@uni-hamburg.de}
\begin{abstract}
We show that, up to minor-equivalence, the Farey graph is the unique minor-minimal graph that is infinitely edge-connected but such that every two vertices can be finitely separated.
\end{abstract}
\keywords{infinite graph; Farey graph; characterisation; connectivity; infinite edge-connectivity; infinitely edge-connected graph; minor; typical pi-graph}

\@namedef{subjclassname@2020}{\textup{2020} Mathematics Subject Classification}
\subjclass[2020]{05C63, 05C40, 05C83, 05C10}

\vspace*{-3cm}
\maketitle

\vspace*{-.7cm}
\begin{figure}[h]
    \centering
    \includegraphics[width=.4\textwidth]{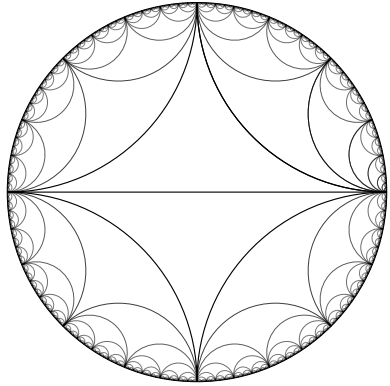}
    \caption{The Farey graph}
    \label{fig:FareyGraph}
\end{figure}

\vspace*{-.5cm}
\section{Introduction}

\noindent The Farey graph, shown in Figure~\ref{fig:FareyGraph} and surveyed in \cites{OfficeHoursGroupTheory,hatcher2017topology}, plays a role in a number of mathematical fields ranging from group theory and number theory to geometry and dynamics~\cite{OfficeHoursGroupTheory}.
Curiously, graph theory has not been among these until very recently, when it was shown that the Farey graph plays a central role in graph theory too: it is one of two infinitely edge-connected graphs that must occur as a minor in every infinitely edge-connected graph~\cite{TypicalInfinitelyEdgeconnectedGraphs}.
Infinite edge-connectivity, however, is only one aspect of the connectivity of the Farey graph, and it contrasts with a second aspect: the Farey graph does not contain infinitely many independent paths between any two of its vertices.
In this paper we show that the Farey graph
is uniquely determined by these two contrasting aspects of its connectivity: up to minor-equivalence, the Farey graph is the unique minor-minimal graph that is \iecon\ but such that every two vertices can be finitely separated.

A \emph{\Pigraph } is an \iecon\ graph that does not contain infinitely many independent paths between any two of its vertices.
A \Pigraph\ is \emph{typical} if it occurs as a minor in every \Pigraph .
Note that any two typical \Pigraph s are minors of each other; we call such graphs \emph{minor-equivalent}.
Our main result reads as follows:

\begin{customthm}{1}\label{Mainresult}
\hspace*{-1.9mm} Up to minor-equivalence, the Farey graph is the unique typical \mbox{\Pigraph }.
\end{customthm}

We shall see that there exist \Pigraph s that contain the Farey graph as a minor but are not minors of the Farey graph (Proposition~\ref{AtypicalPigraphs}).

Theorem~\ref{Mainresult} continues to hold if we require all minors to have finite branch sets; see Section~\ref{subsec:variations} and Theorem~\ref{MainresultFiniteBranch}.
This is best possible in the sense that one cannot replace `minors with finite branch sets' with `topological minors' (Proposition~\ref{BestPossible}).

This paper is organised as follows.
Section~\ref{Sec:Preliminaries} formally introduces the Farey graph.
In Section~\ref{sec:BestPossible} we prove results~\ref{AtypicalPigraphs}--\ref{BestPossible}.
We outline the overall strategy of the proof of Theorem~\ref{Mainresult} in Section~\ref{sec:strategy}.
We prepare the proof of Theorem~\ref{Mainresult} in Section~\ref{sec:GrainLines} and we prove Theorem~\ref{Mainresult} in Section~\ref{sec:FinalProof}.

\begin{ack}
I would like to thank my two referees for helpful comments.
\end{ack}

\section{Preliminaries}\label{Sec:Preliminaries}

\noindent We use the notation of Diestel's book~\cite{DiestelBook5}.
A~non-trivial path $P$ is an $A$-\emph{path} for a set $A$ of vertices if $P$ has its endvertices but no inner vertex in $A$.
Two $u$--$v$ paths are \emph{order-compatible} if they traverse their common vertices in the same order.
We write $G[X]$ for the subgraph of $G$ induced by the vertex set~$X$.
Given a path $P$ we write $\mathring{P}$ for the subpath obtained by deleting the endvertices of~$P$.
If $P$ starts in $u$ and ends in $v$, then we write $\mathring{u}P$ and $P\mathring{v}$ for $P-u$ and $P-v$ respectively.

The \emph{Farey graph} $F$ is the graph on $\Q\cup\{\infty\}$ in which two rational numbers $a/b$ and $c/d$ in lowest terms (allowing also $\infty=(\pm 1)/0$) form an edge if and only if $\det\bigl( \begin{smallmatrix}a & c\\ b & d\end{smallmatrix}\bigr)=\pm 1$, cf.~\cite{OfficeHoursGroupTheory}.
In this paper we do not distinguish between the Farey graph and the graphs that are isomorphic to it.
For our graph-theoretic proofs it will be more convenient to work with the following purely combinatorial definition of the Farey graph that is indicated in~\cite{OfficeHoursGroupTheory} and~\cite{hatcher2017topology}.

The \emph{halved Farey graph} $\hFG_0$ of order $0$ is a $K^2$ with its sole edge coloured blue.
Inductively, the \emph{halved Farey graph} $\hFG_{n+1}$ of order $n+1$ is the edge-coloured graph that is obtained from $\hFG_n$ by adding a new vertex $v_e$ for every blue edge $e$ of $\hFG_n$, joining each $v_e$ precisely to the endvertices of $e$ by two blue edges, and colouring all the edges of $\hFG_n\subset\hFG_{n+1}$ black.
The \emph{halved Farey graph} $\hFG:=\bigcup_{n\in\N}\hFG_n$ is the union of all $\hFG_n$ without their edge-colourings, and the \emph{Farey graph} is the union $F=G_1\cup G_2$ of two copies $G_1,G_2$ of the halved Farey graph such that $G_1\cap G_2=\hFG_0$.

\begin{lemma}\label{FareyGisMinorOfHalvedFareyG}
The halved Farey graph contains the Farey graph as a minor with finite branch sets.
\end{lemma}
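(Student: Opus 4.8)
The plan is to exhibit $F$ directly as a minor of $\hFG$ with branch sets of size at most two. Recall that $F=G_1\cup G_2$ with $G_1,G_2\cong\hFG$ meeting in a single edge $xy=\hFG_0$ together with its endvertices $x,y$; so $V(G_1)\cap V(G_2)=\{x,y\}$ and $E(G_1)\cap E(G_2)=\{xy\}$. I therefore need to find, inside $\hFG$, two subgraphs each isomorphic to $\hFG$ that meet in essentially one edge, plus a little extra connectivity to glue them.

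First I would exploit the self-similarity of $\hFG$. Write $\hFG_0$ as the edge $ab$ and let $c:=v_{ab}$ be the vertex added in $\hFG_1$, so that $abc$ is a triangle whose edges $ac,bc$ are blue while $ab$ is black (in $\hFG_1$). Running the inductive construction starting from the blue edge $ac$ instead of from $ab$ shows, by induction on the order, that the subgraph $H_1\subseteq\hFG$ induced on $a$, $c$ and all vertices subsequently spawned off $ac$ is isomorphic to $\hFG$ via an isomorphism $\varphi_1\colon\hFG\to H_1$ extending a map $\hFG_0\mapsto\{a,c\}$; symmetrically, the subgraph $H_2\subseteq\hFG$ induced on $b$, $c$ and the vertices spawned off $bc$ is a copy of $\hFG$ via $\varphi_2\colon\hFG\to H_2$ extending $\hFG_0\mapsto\{b,c\}$. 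Since each step of the construction introduces pairwise distinct new vertices, one reads off that $V(H_1)\cap V(H_2)=\{c\}$, with $a\in V(H_1)\setminus V(H_2)$ and $b\in V(H_2)\setminus V(H_1)$, and of course $ab\in E(\hFG)$.

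Next I would set up the branch sets. Using that $\hFG_0=K^2$ has an automorphism swapping its two vertices, choose isomorphisms (again called $\varphi_i$) $\varphi_1\colon G_1\to H_1$ and $\varphi_2\colon G_2\to H_2$ with $\varphi_1(x)=a$, $\varphi_1(y)=c=\varphi_2(y)$, $\varphi_2(x)=b$. Declare the branch set of $x$ to be $\{a,b\}$, that of $y$ to be $\{c\}$, and for every $v\in V(G_1)\setminus\{x,y\}$ (resp.\ $v\in V(G_2)\setminus\{x,y\}$) let its branch set be the singleton $\{\varphi_1(v)\}$ (resp.\ $\{\varphi_2(v)\}$). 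Each of these sets is connected in $\hFG$: $\{a,b\}$ because $ab$ is an edge, and the rest are singletons. They are pairwise disjoint: $\{a,b\}$, $\{c\}$, $V(H_1)\setminus\{a,c\}$ and $V(H_2)\setminus\{b,c\}$ are pairwise disjoint because $V(H_1)\cap V(H_2)=\{c\}$ while $a\notin V(H_2)$ and $b\notin V(H_1)$.

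Finally I would verify that every edge of $F$ is witnessed. For an edge $uv$ of $G_1$ the image $\varphi_1(u)\varphi_1(v)$ is an edge of $H_1\subseteq\hFG$, and $\varphi_1(u)$ lies in the branch set of $u$ for every $u\in V(G_1)$ (the only cases to note being $\varphi_1(x)=a\in\{a,b\}$ and $\varphi_1(y)=c\in\{c\}$), and likewise for $v$; the same argument applies to edges of $G_2$ via $\varphi_2$, with $b$ in place of $a$, and the shared edge $xy$ is witnessed by $ac$. Hence $\hFG$ contains $F$ as a minor with branch sets of size at most $2$, in particular with finite branch sets. I do not expect a real obstacle here: the single idea that must be spotted is that the two self-similar copies $H_1,H_2$ of $\hFG$ overlap in exactly the vertex $c$, so that the one leftover edge $ab$ of $\hFG$ has to be absorbed by giving $x$ the two-element branch set $\{a,b\}$.
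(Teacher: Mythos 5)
Your proof is correct and takes essentially the same approach as the paper: the paper's one\-/line argument deletes one of the two blue edges $ac,bc$ of $\hFG_1$ and contracts $V(\hFG_0)=\{a,b\}$ to get $(\hFG-e)/V(\hFG_0)\cong F$, which is exactly your model with branch set $\{a,b\}$ for $x$ and singletons elsewhere. The only cosmetic difference is that you keep the second, redundant edge between the branch sets of $x$ and $y$ instead of deleting it, which is harmless for a minor.
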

\begin{proof}
If $e$ is a blue edge of $\hFG_1$, then the Farey graph is the contraction minor of $\hFG-e$ whose sole non-trivial branch set is~$V(\hFG_0)$, i.e., $(\hFG-e)/V(\hFG_0)\cong\FG$.
\end{proof}

\section{Atypical \texorpdfstring{$\Pi$}{Pi}-graphs and variations of the main result}\label{sec:BestPossible}

\noindent In this section we provide details on and prove the three results~\ref{AtypicalPigraphs}--\ref{BestPossible} that we briefly mentioned in the introduction.

\subsection{Atypical \texorpdfstring{$\Pi$}{Pi}-graphs}

Even though every \Pigraph\ contains the Farey graph as a minor by Theorem~\ref{Mainresult}, the converse is generally false:

\begin{proposition}\label{AtypicalPigraphs}
There is a countable planar \Pigraph\ that contains the Farey graph as an induced subgraph but is not a minor of the Farey graph.
\end{proposition}
\begin{proof}
The Farey graph is \emph{outerplanar} in that it has a drawing in which every vertex lies on the unit circle
and every edge is contained in the unit disc.
Thus, the graph obtained from the Farey graph by joining an additional vertex to all its vertices is still planar, and hence contains no $K_{3,3}$ minor.
As a consequence, the Farey graph does not contain a $K_{2,3}$ minor.

Let the graph $G$ be obtained from $K_{2,3}$ by adding for every edge a copy of the Farey graph such that the added copies are disjoint outside of $K_{2,3}$ and each copy intersects $K_{2,3}$ precisely in the edge for which it was added.
Then $G$ is a countable planar \Pigraph\ which contains the Farey graph as an induced subgraph, but its $K_{2,3}$ subgraph witnesses that $G$ cannot be a minor of the Farey graph.
\end{proof}

\subsection{Variations of the main result}\label{subsec:variations}

\noindent To prove Theorem~\ref{Mainresult} it suffices to show the following theorem.
A \emph{tight} minor is a minor with finite branch sets.

\begin{customthm}{2}\label{FGfinBranchSets}
Every \Pigraph\ contains the Farey graph as a tight minor.
\end{customthm}

\noindent Theorem~\ref{FGfinBranchSets} also implies the following variation of Theorem~\ref{Mainresult} where all minors are required to have finite branch sets.
Two graphs are \emph{tightly} minor-equivalent if they are tight minors of each other.
A~\Pigraph\ is \emph{tightly} typical if it occurs as a tight minor in every \Pigraph .

\begin{theorem}\label{MainresultFiniteBranch}
Up to tight minor-equivalence, the Farey graph is the unique tightly typical \Pigraph .\qed
\end{theorem}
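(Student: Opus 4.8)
The plan is to obtain Theorem~\ref{MainresultFiniteBranch} as a short formal consequence of Theorem~\ref{FGfinBranchSets}: all of the genuine content lies in the latter --- which I assume here, since it is established in Sections~\ref{sec:GrainLines}--\ref{sec:FinalProof} --- and what remains is bookkeeping with the definitions of \emph{tightly typical} and \emph{tightly minor-equivalent}.

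The one preliminary observation is that the Farey graph $F$ is itself a \Pigraph. That $F$ is \iecon\ is recalled in the introduction, where $F$ appears as one of the two graphs of~\cite{TypicalInfinitelyEdgeconnectedGraphs} that must occur as a minor in every \iecon\ graph; and that no two vertices of $F$ are joined by infinitely many independent paths is the complementary connectivity feature of $F$ also pointed out there. Hence $F$ is a \Pigraph, and Theorem~\ref{FGfinBranchSets} applies in particular to $F$.

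Now I would argue the two halves of the statement. \emph{Existence.} By Theorem~\ref{FGfinBranchSets} every \Pigraph\ contains $F$ as a tight minor, i.e.\ $F$ occurs as a tight minor in every \Pigraph; this is exactly the definition of $F$ being tightly typical. \emph{Uniqueness.} Let $H$ be an arbitrary tightly typical \Pigraph. Since $H$ occurs as a tight minor in every \Pigraph\ and $F$ is a \Pigraph, $H$ is a tight minor of $F$. Conversely, since $H$ is a \Pigraph, Theorem~\ref{FGfinBranchSets} gives that $F$ is a tight minor of $H$. Thus $F$ and $H$ are tight minors of each other, that is, tightly minor-equivalent. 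Combining the two halves: $F$ is a tightly typical \Pigraph, and every tightly typical \Pigraph\ is tightly minor-equivalent to $F$; this is the assertion of Theorem~\ref{MainresultFiniteBranch}. (For the statement to be well posed one also notes the routine fact that ``being a tight minor of'' is transitive: if $G$ is a tight minor of $H$ with branch sets $B_x$ and $H$ is a tight minor of $K$ with branch sets $C_w$, then the sets $\bigcup_{w\in B_x} C_w$ exhibit $G$ as a tight minor of $K$, each being a finite connected union of finite connected sets; so tight minor-equivalence really is an equivalence relation.)

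I do not foresee a real obstacle: the difficulty of Theorem~\ref{MainresultFiniteBranch} has been entirely absorbed into Theorem~\ref{FGfinBranchSets}, and the deduction above is elementary. The only step warranting even one line of care is the verification that $F$ meets both clauses of the definition of a \Pigraph, and this is immediate from the properties of the Farey graph already on record.
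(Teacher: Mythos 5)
Your proposal is correct and matches the paper's intent exactly: the paper states this theorem with no proof beyond noting that it follows from Theorem~\ref{FGfinBranchSets}, and your deduction (existence via Theorem~\ref{FGfinBranchSets}, uniqueness by applying tight typicality to $F$ and Theorem~\ref{FGfinBranchSets} to the candidate graph) is precisely the omitted bookkeeping. The only additional content you supply --- checking that $F$ is itself a \Pigraph\ and that tight minors compose --- is routine and consistent with what the paper takes for granted.
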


\noindent This raises the question whether Theorem~\ref{Mainresult} continues to hold if we require all minors to be topological minors.
We answer this question in the negative:

\begin{proposition}\label{BestPossible}
There is a \Pigraph\ that contains the Farey graph as a tight minor but not as a topological minor.
\end{proposition}
\begin{proof}
By a recent result~\cite{OrderCompatiblePaths} there exists an \iecon\ graph $G$ that does not contain infinitely many edge-disjoint pairwise order-compatible paths between any two of its vertices; in particular, $G$ is a \Pigraph .
By Theorem~\ref{FGfinBranchSets}, the graph $G$ contains the Farey graph as a tight minor.
However, $G$ does not contain a subdivision of the Farey graph because the Farey graph contains infinitely many edge-disjoint pairwise order-compatible paths between any two of its vertices.
\end{proof}

\section{Overall proof strategy}\label{sec:strategy}

\noindent Our aim for the remainder of this paper is to prove Theorem~\ref{Mainresult}.
As we discussed in the previous section, to prove Theorem~\ref{Mainresult} it suffices to show that every \Pigraph\ contains the Farey graph as a minor with finite branch sets (Theorem~\ref{FGfinBranchSets}).
And by Lemma~\ref{FareyGisMinorOfHalvedFareyG} in turn it suffices to find a halved Farey graph minor with finite branch sets in any given \Pigraph .
The key idea of the proof is summarised in Theorem~\ref{GrainLineSplit} which states:

\emph{Suppose that $G$ is any subdivided \Pigraph\ and that $u,v$ are two distinct branch vertices of $G$.
Then there exist subgraphs $H_u,H_v\subset G$ whose vertex sets have non-empty intersection~$X$, such that the following conditions are satisfied:
\begin{enumerate}
\item $H_u[X]=H_v[X]$ and both induced subgraphs are finite and connected;
\item $X$ avoids $u$ and $v$;
\item both $H_u/X$ and $H_v/X$ are subdivided \Pigraph s in which $u,X$ and $v,X$ are branch vertices, respectively;
\item $uX$ is an edge of $H_u/X$ and $vX$ is an edge of $H_v/X$.
\end{enumerate}}

With this theorem at hand, it is straightforward to construct a halved Farey graph minor with finite branch sets in any given \Pigraph\ $G$:
Consider any edge $uv$ of $G$ and apply the theorem in $G$ to $u$ and $v$ to obtain subgraphs $H_u,H_v$ and a non-empty finite connected vertex set $X\subset V(G)$.
Then the three vertices $u,v$ and $X$ span a triangle $\hFG_1$ in $(H_u\cup H_v)/X$.
And since both $H_u/X$ and $H_v/X$ are subdivided \Pigraph s, we can reapply the theorem in $H_u/X$ to $u$ and $X$, and in $H_v/X$ to $v$ and $X$.
By iterating this process, we obtain a halved Farey graph minor with finite branch sets in the original graph $G$ at the limit, and this will complete the proof.
Therefore, it remains to prove Theorem~\ref{GrainLineSplit} on the one hand, and to use it to formally construct a halved Farey graph minor on the other hand.
In the next section, we prepare the proof of Theorem~\ref{GrainLineSplit}, and in the section after next we prove Theorem~\ref{GrainLineSplit} which we then use to prove Theorems~\ref{Mainresult} and~\ref{FGfinBranchSets}.

\section{Grain lines}\label{sec:GrainLines}

\noindent It is possible to prove Theorem~\ref{GrainLineSplit} from first principles.
In this paper, however, I~favour a more methodic proof.
The advantage of this proof is that it introduces a new tool, an $x$--$y$ grain line, that allows one to control infinite systems of edge-disjoint $x$--$y$ paths even when no two paths in the system are pairwise order-compatible.
In this section we introduce the concept of an $x$--$y$ grain line, we show that these exist whenever it matters (Theorem~\ref{GrainLineExistence}) and we show two lemmas that will help us prove Theorem~\ref{GrainLineSplit} using grain lines at the beginning of the next section.

Informally, we may think of an $x$--$y$ grain line as a pair $(L,\cP)$ where $\cP$ is a sequence of pairwise edge-disjoint $x$--$y$ paths $P_0,P_1,\ldots$ that need not be pairwise order-compatible but solve all incompatibilities at their linearly ordered `limit'~$L$. 
The limit $L$ will not be a graph-theoretic path but will be a linearly ordered set of vertices.
We remark, however, that it is possible to use the limit $L$ to define a topological $x$--$y$ path in a topological extension of any graph containing the grain line, see~\cite{KurkofkaMSc}*{§6.3}.

Here is the formal definition of an $x$--$y$ grain line:

\begin{definition}
An $x$--$y$ \emph{grain line} between two distinct vertices $x$ and $y$ is an ordered pair $(L,\cP)$ where $L=(L,{\le}_L)$ is a linearly ordered countable set of vertices with least element $x$ and greatest element~$y$, and $\cP=(P_n)_{n\in\N}$ is a sequence of pairwise edge-disjoint $x$--$y$ paths $P_n$, such that the following three conditions are satisfied:
\begin{enumerate}[label=(GL\arabic*)]
\item\label{GLinL} $L=\Big\{\,v\;\Big\vert\; \{\,n\in\N: v\in V(P_n)\,\}\text{ is a final segment of }\N\,\Big\}$;
\item\label{GLninL} if a vertex of a path $P_n$ is not contained in $L$, then it is not a vertex of any other path $P_m$ ($m\neq n$);
\item\label{GLgrain} for all $n\in\N$, the $x$--$y$ path $P_n$ and the linearly ordered vertex set $L$ induce the same linear ordering on the vertex set $L_{<n}:=L\cap\bigcup_{k<n}V(P_k)$. 
\end{enumerate}
\end{definition}

Note that we allow $L$ to be finite, for example we may have $L=\{x,y\}$ if the paths in~$\cP$ are independent.

We remark that~\ref{GLgrain} allows $P_n$ and $L$ to induce distinct linear orderings on the vertex set $V(P_n)\cap L$ if the inclusion $L_{<n}\subset V(P_n)\cap L$ is proper; in particular, $P_n$ and $P_{n+1}$ need not be order-compatible.
Allowing this becomes necessary, for example, if an \iecon\ graph does not contain infinitely many edge-disjoint pairwise order compatible paths between $x$ and $y$, see Example~\ref{WhirlExample}.

Clearly, $L=\bigcup_n L_{<n}$.
Note that if $(L,(P_n)_{n\in\N})$ is a grain line, then a vertex $v$ lies in $L$ if and only if it lies on all paths $P_n$ with $n\ge N$ for $N$ the first number with $v\in P_N$ if and only if it lies on at least two paths $P_n,P_m$ ($n\neq m$).
In particular,
\begin{align*}
V(P_n)\cap \bigcup_{k<n}V(P_k)=L_{<n}\text{ for all }n\in\N.
\end{align*}

We speak of an $x$--$y$ grain line $(L,(P_n)_{n\in\N})$ \emph{in} a graph $G$ if $\bigcup_{n\in\N} P_n\subset G$ (and hence $L\subset V(G)$).
Whenever a grain line is introduced as $(L,\cP)$, we tacitly assume $\cP=(P_n)_{n\in\N}$.
In general, however, we also allow sequences $\cP=(P_n)_{n\ge N}$ whose indexing starts at an arbitrary number $N>0$ in which case the definition of a grain line adapts in the obvious way.
We use the interval notation for $L$ as usual, i.e., we write $[\ell_1,\ell_2]_L=\{\,\ell\in L\mid \ell_1\le_L\ell\le_L\ell_2\,\}$ and so~on.

\begin{example}\label{GrainLinesFareyExample}
The blue Hamilton paths $P_n\subset\hFG_n$ are pairwise edge-disjoint and order-compatible, and hence give rise to an $x$--$y$ grain line in $\hFG$ for $x$ and $y$ the two vertices of $\hFG_0$.
In this case, $L=V(\hFG)$ is order-isomorphic to $\Q\cap [0,1]$.
\end{example}

\begin{example}\label{WhirlExample}
There exists an \iecon\ graph $G$ that does not contain infinitely many edge-disjoint pairwise order-compatible paths between any two of its vertices~\cite{OrderCompatiblePaths}; in particular, $G$ is a \Pigraph .
We shall see that the graph $G$ contains a grain line between any two of its vertices because it is \iecon ; see Theorem~\ref{GrainLineExistence} below.
However, since $G$ does not contain infinitely many edge-disjoint pairwise order-compatible paths between any two of its vertices, every grain line $(L,\cP)$ in $G$ has two paths $P_n$ and $P_{n+1}$ that are not order-compatible; in particular, $P_n$ induces the same linear ordering on $L_{<n}\subsetneq V(P_n)\cap L$ as $L$ does, but disagrees with $L$ on $V(P_n)\cap L=V(P_n)\cap V(P_{n+1})$ because $L$ induces the ordering of $P_{n+1}$ on $V(P_n)\cap V(P_{n+1})=L_{<n+1}$.
%
This is why we do not strengthen \ref{GLgrain} to require that $P_n$ and $L$ induce the same linear ordering on $V(P_n)\cap L\supset L_{<n}$.
\end{example}



Our first result on grain lines shows that they exist whenever it matters:

\begin{theorem}\label{GrainLineExistence}
Let $x$ and $y$ be any two distinct vertices of a graph $G$.
Then there exists an $x$--$y$ grain line in $G$ if and only if $G$ contains infinitely many edge-disjoint $x$--$y$ paths.
\end{theorem}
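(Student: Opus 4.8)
The forward direction is immediate: if $(L,(P_n)_{n\in\N})$ is an $x$--$y$ grain line in $G$, then the paths $P_n$ are by definition pairwise edge-disjoint $x$--$y$ paths, so $G$ contains infinitely many of them. The content is the converse, so assume $G$ contains infinitely many edge-disjoint $x$--$y$ paths; I want to extract from them a grain line. The plan is to build the sequence $\cP=(P_n)_{n\in\N}$ together with the linear order on $L=\bigcup_n L_{<n}$ recursively, at each step pruning and re-choosing paths from an infinite reservoir so that the grain condition \ref{GLgrain} is forced to hold. Fix an infinite family $\cQ$ of pairwise edge-disjoint $x$--$y$ paths to start with.

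The key device is a pigeonhole/Ramsey-type stabilisation. Suppose at stage $n$ we have already fixed $P_0,\dots,P_{n-1}$, hence $L_{<n}=L\cap\bigcup_{k<n}V(P_k)$ is a fixed finite vertex set, and we still have an infinite subfamily $\cQ_n\subseteq\cQ$ of edge-disjoint $x$--$y$ paths, each edge-disjoint from $P_0,\dots,P_{n-1}$, such that some additional bookkeeping invariants hold. Each path $Q\in\cQ_n$ meets the finite set $L_{<n}$ in some subset and traverses that subset in some linear order; there are only finitely many such (subset, order) pairs, so by pigeonhole infinitely many $Q\in\cQ_n$ agree on it. Crucially, the invariant I will maintain is that this common (subset, order) is exactly $L_{<n}$ with its already-chosen order — this is arranged by, at the very step where a vertex $v$ first enters $L$, only ever promoting $v$ into $L$ when cofinally many of the surviving paths pass through $v$ in a consistent position, and discarding the non-conforming paths. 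Having passed to this infinite conforming subfamily, pick any one of its members to be $P_n$; then define $L_{<n+1}:=L\cap\bigcup_{k\le n}V(P_k)$, i.e.\ a vertex of $P_n$ is promoted into $L$ precisely if it lies on some earlier $P_k$ ($k<n$) — this makes \ref{GLninL} hold by construction, since a vertex on exactly one $P_n$ is never promoted — and extend the linear order on $L_{<n}$ to $L_{<n+1}$ by inserting the new vertices in the order in which $P_n$ traverses them (this is well-defined, i.e.\ independent of which conforming $P_n$ we picked, because all surviving paths agreed on $L_{<n}$ and we only insert between consecutive elements of $L_{<n}$, within a "window" along $P_n$). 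Finally let $L=\bigcup_n L_{<n}$ with the induced order, and verify \ref{GLinL}: a vertex $v$ lies in $L$ iff it lies on at least two of the paths iff (since once $v\in L_{<N}$ it survives into every later conforming family, so lies on $P_N,P_{N+1},\dots$) the set $\{n: v\in V(P_n)\}$ is a final segment of $\N$. To get $x$ as least and $y$ as greatest element, note $x,y$ lie on every path so enter $L$ at stage $1$, and every path traverses from $x$ to $y$, so they sit at the two ends of each $L_{<n}$.

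The main obstacle — and the step that needs care — is maintaining consistency of the linear order across stages: when I promote new vertices of $P_n$ into $L_{<n+1}$, I insert them between consecutive old elements of $L_{<n}$ according to $P_n$, but I must guarantee that every later path $P_m$ ($m>n$) also traverses these vertices in that same relative order, so that \ref{GLgrain} holds for $P_m$ on $L_{<m}\supseteq L_{<n+1}$. This is exactly what the "pass to an infinite conforming subfamily" step buys me, but one has to set up the invariant on $\cQ_n$ strong enough to be preserved: I will carry the invariant ``every $Q\in\cQ_n$ is edge-disjoint from $P_0,\dots,P_{n-1}$, and traverses $L_{<n}$ in exactly the order $\le_L$ restricted to $L_{<n}$'', and check that after choosing $P_n$ and pigeonholing on the finite set $L_{<n+1}$ this invariant passes to $\cQ_{n+1}$. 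Edge-disjointness is free because all paths in the reservoir were edge-disjoint to begin with and we only ever delete paths, never edges. A secondary point to check is that $L$ is countable, which is clear since each $L_{<n}$ is finite, and that the resulting $(L,\cP)$ satisfies the displayed identity $V(P_n)\cap\bigcup_{k<n}V(P_k)=L_{<n}$, which is immediate from the definition of $L_{<n+1}$ and \ref{GLninL}.
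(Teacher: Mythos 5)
Your forward direction is fine, and the overall recursive-pigeonhole strategy for the converse is viable in principle, but as written the construction has a genuine gap in the converse: your two rules for membership in $L$ are in conflict, and the pigeonholing is performed on too small a set to reconcile them. You promote a vertex $v$ of $P_n$ into $L$ \emph{precisely when} it lies on some earlier chosen path (which is forced on you by~\ref{GLninL}), but you only guarantee stability of $v$ --- i.e.\ that all but finitely many surviving reservoir paths pass through $v$ in a consistent position --- ``at the very step where $v$ first enters $L$'', by pigeonholing on $L_{<n+1}$. Nothing in your construction prevents the following: a vertex $v$ lies on $P_k$ and on exactly one other reservoir path $Q$, and $Q$ is later chosen as $P_m$. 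Then \ref{GLninL} forces $v\in L$, but only two of the $P_n$ contain $v$, so \ref{GLinL} fails; and no amount of discarding at stage $m$ can repair this, because there simply are no further reservoir paths through $v$. The same unconstrained intersections also threaten \ref{GLgrain}: when you insert the new vertices of $L_{<n+1}$ ``in the order in which $P_n$ traverses them'', you need every later $P_m$ to traverse them in that same relative position, and pigeonholing on $L_{<n+1}$ only \emph{after} those vertices have been forced into $L$ comes too late to guarantee an infinite conforming subfamily exists. The missing idea is that the stabilisation must happen \emph{before} a vertex can ever be shared by two chosen paths: the invariant on $\cQ_{n}$ must be that every surviving path meets $\bigcup_{k<n}V(P_k)$ in \emph{exactly} $L_{<n}$, traversed in the order $\le_L$, which requires pigeonholing on the full finite set $\bigcup_{k\le n}V(P_k)$ (in particular on all of $V(P_n)$, not just on $L_{<n+1}$), so that which vertices of $P_n$ enter $L$ is \emph{decided by} the pigeonhole rather than by later accidental intersections.

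For comparison, the paper avoids this quantifier trap by fixing the entire limit order in advance: it forms the inverse system of the finite sets $\cL_X$ of linear orders on finite $X\subset V(\bigcup\cQ)$ that are realised by infinitely many reservoir paths, takes a point $(L_X,\le_X)_{X}$ in the non-empty inverse limit, and only then chooses $P_{n+1}$ from the infinite preimage $\psi_{X_{n+1}}^{-1}(L_{X_{n+1}})$ with $X_{n+1}\supseteq V(P_n)$. Because $X_{n+1}$ contains all of $V(P_n)$, the new path meets $P_n$ exactly in $L_{X_{n+1}}\cap V(P_n)$, every vertex of which is by construction on infinitely many consistently-positioned paths --- exactly the property your argument needs but does not secure. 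Your approach can be repaired to a correct ``unrolled'' version of this compactness argument by strengthening the invariant as above; as it stands, the reconciliation of the promotion rule with the stabilisation rule is asserted rather than proved, and it is precisely the non-trivial content of the theorem.
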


In the proof we employ inverse systems, and for the sake of convenience we dedicate a paragraph to their definition.

A partially ordered set $(I,\le)$ is said to be \emph{directed} if for every two $i,j\in I$ there is some $k\in I$ with $k\ge i,j$.
Let $(\,X_i\mid i\in I\,)$ be a family of finite sets indexed by some directed poset $(I,\le)$.
Furthermore, suppose that we are given a family $(\,\varphi_{ji}\colon X_j\to X_i\,)_{i\le j\in I}$ of mappings which are the identity on $X_i$ in case of $i=j$ and which are \emph{compatible} in that $\varphi_{ki}=\varphi_{ji}\circ\varphi_{kj}$ for all $i\le j\le k$. 
Then both families together are said to form an \emph{inverse system} (\emph{of finite sets}), and the maps $\varphi_{ji}$ are called its \emph{bonding maps}. 
We denote such an inverse system by $\{X_i,\varphi_{ji},I\}$ or $\{X_i,\varphi_{ji}\}$ for short if $I$ is clear from context.
Its \emph{inverse limit} $\invLim{} X_i=\invLim{}(\,X_i\mid i\in I\,)$ is the set
\begin{align*}
\invLim{} X_i=\{\,(x_i)_{i\in I}\mid \varphi_{ji}(x_j)=x_i\text{ for all }j\ge i\,\}\subseteq \prod_{i\in I}X_i.
\end{align*}
If every $X_i$ is non-empty, then the inverse limit $\invLim{}X_i$ is non-empty as well.
For more details on inverse systems and their more general definition for topological spaces, see \cite{EngelkingBook} or~\cite{ProfiniteGroups}.

\begin{proof}[{Proof of Theorem~\ref{GrainLineExistence}}]
Every $x$--$y$ grain line comes with a system of infinitely many edge-disjoint $x$--$y$ paths.
For the backward implication let $x$ and $y$ be given, and let $\cQ$ be any countably infinite collection of edge-disjoint $x$--$y$ paths in~$G$.
Moreover, we let $\cX$ be the collection of all finite subsets of the vertex set of the subgraph $\bigcup\cQ\subset G$, directed by inclusion.

Given $X\in\cX$ we write $\lin (X)$ for the finite collection of all linearly ordered subsets of $X$.
Letting, for all $X\subset X'\in\cX$, the map $\varphi_{X',X}\colon\lin(X')\to\lin(X)$ take every linearly ordered subset of $X'$ to its restriction with respect to $X$ turns the finite sets $\lin(X)$ into an inverse system $\{\,\lin(X),\,\varphi_{X',X},\,\cX\,\}$.

Every $x$--$y$ path $P\in\cQ$ naturally induces a linear ordering ${\le}_P$ on its vertex set with $x<_P y$, and for every $X\in\cX$ we denote by ${\le}_P^X$ the linear ordering on $V(P)\cap X$ induced by ${\le}_P$.
Then for every $X\in\cX$ we define a map $\psi_X\colon\cQ\to\lin(X)$ by letting
\begin{align*}
\psi_X(P):=(V(P)\cap X,{\le}_P^X)
\end{align*}
for all $P\in\cQ$, and we put
\begin{align*}
\cL_X:=\{\,\xi\in\lin(X)\mid\psi_X^{-1}(\xi)\subset\cQ\text{ is infinite}\,\}
\end{align*}
noting that $\cL_X\subset\lin(X)$ is non-empty by the pigeonhole principle.
Since the maps $\psi_X$ commute with the bonding maps $\varphi_{X',X}$ as pictured in the diagram below,
\begin{equation*}
\begin{tikzcd}
  & \cQ\arrow[dl, "\psi_X"']\arrow[dr, "\psi_{X'}"]\\\
\lin (X) & & \lin (X')\arrow[ll, "\varphi_{X',X}"']
\end{tikzcd}
\end{equation*}
the restrictions of these bonding maps to the sets $\cL_X$ yield another inverse system, namely $\{\,\cL_X,\,\varphi_{X',X}\rest\cL_{X'},\,\cX\,\}$.
And as the finite sets $\cL_X$ are all non-empty, this inverse system has an element $(\,(L_X,{\le_X})\mid X\in\cX\,)$ in its limit.

Finally, we define an $x$--$y$ grain line $(L,\cP)$, as follows.
We let $L:=\bigcup_{X\in\cX}L_X$ and ${\le}_L:=\bigcup_{X\in\cX}{\le}_X$.
To obtain $\cP=(P_n)_{n\in\N}$ we choose pairwise edge-disjoint $x$--$y$ paths $P_0,P_1,\ldots$ from $\cQ$ inductively, as follows.
At step 0, we let $X_0:=\emptyset$ and choose $P_0\in\psi_{X_0}^{-1}(L_{X_0})$ arbitrarily (we abbreviate $L_X=(L_X,{\le}_X)$).
At step $n+1$, we let $X_{n+1}:=X_n\cup V(P_n)$ and we pick from the infinite preimage $\psi_{X_{n+1}}^{-1}(L_{X_{n+1}})$ a path $P_{n+1}$ other than the previously chosen paths $P_0,\ldots,P_n$.
Then the paths $P_0,P_1,\ldots$ are pairwise edge-disjoint. To verify that $(L,\cP)$ is an $x$--$y$ grain line in~$G$, it remains to check \ref{GLinL}--\ref{GLgrain}.

First, we present an argument which implies \ref{GLinL}~and~\ref{GLninL}.
Consider any vertex $v$ of $\bigcup\cP$ and let $N\in\N$ be minimal with $v\in V(P_N)$.
Then $v\in X_{N+1}$.
If~$v$ is contained in the set~$L_{X_{N+1}}$, then $v$ is also contained in~$L$ and in all sets $L_{X_n}$ with $n\ge N+1$, and hence $v$ is also contained in all paths $P_n$ with $n\ge N+1$.
Otherwise, if $v$ is not contained in the set $L_{X_{N+1}}$, then $v$ is contained in no set $L_X$ with $X\in\cX$ because $L_X\cap X_{N+1}$ is included in $L_{X\cup X_{N+1}}\cap X_{N+1}=L_{X_{N+1}}$.
So in particular $v$ is not contained in~$L$.
And $v$ is not contained in any path $P_n$ with $n\ge N+1$, because all these $P_n$ intersect $X_{N+1}\subset X_n$ precisely in $L_{X_{N+1}}$.

Finally, \ref{GLgrain} follows immediately from the two facts that, for all $n\in\N$,
\[L_{<n}=L\cap\bigcup_{k< n}V(P_k)\subset X_{n}\]
and $P_{n}\in\psi^{-1}_{X_{n}}(L_{X_{n}})$.
\end{proof}

A grain line $(L,\cP)$ is \emph{wild} if $L$ is order-isomorphic to $\Q\cap [0,1]$.
We call a grain line $(L,\cP)$ \emph{wildly presented} if, for every $n\in\N$, whenever $\ell_1<_L\ell_2$ are elements of $L_{<n}\subset L$ then $\mathring{\ell}_1 P_n\mathring{\ell}_2$ has a vertex in $(\ell_1,\ell_2)_L$.
The grain line in Example~\ref{GrainLinesFareyExample} is both wild and wildly presented.
Wildly presented grain lines are wild.
Conversely, if a grain line $(L,\cP)$ is wild, then $\cP=(P_n)_{n\in\N}$ has a subsequence $(P_{n_k})_{k\in\N}$ such that $(L,(P_{n_k})_{k\in\N})$ is wildly presented.

\begin{lemma}\label{wildGLinPigraph}
Every grain line in a subdivided \Pigraph\ is wild; in particular, in a subdivided \Pigraph\ every grain line can be chosen to be wildly presented.
\end{lemma}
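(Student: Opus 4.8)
The plan is to prove the contrapositive form of the main assertion: if a grain line $(L,\cP)$ in a subdivided $\Pi$-graph $G$ is \emph{not} wild, we produce infinitely many independent $x$--$y$ paths in $G$, contradicting that $G$ is a $\Pi$-graph (this property is inherited by subdivisions). Recall that a countable linear order is order-isomorphic to $\Q\cap[0,1]$ if and only if it is a dense order with least and greatest elements and nothing in between being a ``gap'' — more precisely, by Cantor's theorem, iff it is dense, has endpoints, and is nonempty. Since $L$ already has least element $x$ and greatest element $y$ by definition, the only way $(L,\cP)$ can fail to be wild is that $L$ is \emph{not dense}: there exist $\ell_1 <_L \ell_2$ in $L$ with the open interval $(\ell_1,\ell_2)_L$ empty, i.e.\ $\ell_1$ and $\ell_2$ are consecutive in $L$.

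So fix such a consecutive pair $\ell_1 <_L \ell_2$. First I would locate an index $N$ with $\ell_1,\ell_2 \in L_{<N}$, which exists since $L=\bigcup_n L_{<n}$. For every $n\ge N$ we then have $\ell_1,\ell_2\in V(P_n)$, and by \ref{GLgrain} all these paths $P_n$ induce the same ordering on $L_{<n}\supseteq\{\ell_1,\ell_2\}$, so $\ell_1$ precedes $\ell_2$ on each $P_n$. Consider the segments $Q_n := \ell_1 P_n \ell_2$ for $n\ge N$. These are $\ell_1$--$\ell_2$ paths, and I claim that (after discarding $\ell_1$ and $\ell_2$ themselves) their interiors are pairwise disjoint: if an inner vertex $v$ of $Q_n$ lay on some $Q_m$ with $m\ne n$, then $v$ would be a vertex of two distinct paths $P_n,P_m$, hence $v\in L$ by the characterisation of $L$ recorded after the definition (``$v\in L$ iff $v$ lies on at least two paths''); but then $\ell_1 <_L v <_L \ell_2$ by \ref{GLgrain} applied with an index large enough to contain $v,\ell_1,\ell_2$ in its $L_{<k}$, contradicting that $(\ell_1,\ell_2)_L=\emptyset$. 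Moreover, since $\ell_1$ and $\ell_2$ are consecutive in $L$ and no inner vertex of $Q_n$ is in $L$, the interiors of the $Q_n$ avoid $x$ and $y$ as well. Thus $\{Q_n : n\ge N\}$ is an infinite family of independent $\ell_1$--$\ell_2$ paths in $G$, contradicting that the subdivided $\Pi$-graph $G$ has no infinitely many independent paths between any two of its vertices. This proves the first assertion; the ``in particular'' clause is then immediate from the already-established fact (stated just before the lemma) that every wild grain line has a subsequence forming a wildly presented grain line.

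\textbf{Main obstacle.} The only delicate point is to argue cleanly that failure of wildness forces $L$ to be non-dense rather than, say, to have some other defect. This is where I would invoke Cantor's back-and-forth theorem explicitly: a countable linear order with distinct least and greatest elements is order-isomorphic to $\Q\cap[0,1]$ precisely when it is densely ordered; since $L$ is countable (by definition) and has least element $x$ and greatest element $y\ne x$, non-wildness is \emph{equivalent} to the existence of a consecutive pair. Everything after that is the short edge-disjointness-to-independence argument above, using only \ref{GLgrain} and the ``two paths'' characterisation of membership in $L$. One should also note in passing that it suffices to work with the tail $(P_n)_{n\ge N}$, which is again a grain line by the convention allowing arbitrary starting indices; no genuine difficulty arises there.
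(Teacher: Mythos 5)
Your proof is correct, but it runs in the opposite direction from the paper's. The paper verifies density of $L$ directly: given $\ell_1<_L\ell_2$, it uses the \emph{finite-separability} half of the \Pigraph\ definition to find a finite set $S$ separating $\ell_1$ from $\ell_2$ in $G-\ell_1\ell_2$, picks a path $P_N$ that avoids the edge $\ell_1\ell_2$ and grains $S\cup\{\ell_1,\ell_2\}$, and reads off a vertex $s\in S\cap L$ with $\ell_1<_Ls<_L\ell_2$. You instead argue the contrapositive: a consecutive pair $\ell_1<_L\ell_2$ would make the segments $Q_n=\ell_1P_n\ell_2$ internally disjoint, producing infinitely many independent $\ell_1$--$\ell_2$ paths and contradicting the \emph{no-infinitely-many-independent-paths} half of the definition. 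The two arguments are dual via Menger's theorem; yours is somewhat more elementary in that it needs neither the ``grains'' machinery nor an explicit separator, only \ref{GLgrain} and the two-paths characterisation of membership in $L$, while the paper's direct argument is what motivates the graining notion it reuses later. Two small points to tighten: (a)~when a common inner vertex $v$ of $Q_n$ and $Q_m$ is found, you must apply \ref{GLgrain} specifically to $P_{\max(n,m)}$ --- for an arbitrary large index $k$ you would know that $L$ agrees with $P_k$ on $\{\ell_1,v,\ell_2\}$ but not that $P_k$ places $v$ between $\ell_1$ and $\ell_2$; since $v$ is an inner vertex of \emph{both} $Q_n$ and $Q_m$ and lies on the smaller-indexed path, $v\in L_{<\max(n,m)}$ and the argument goes through. (b)~Your aside that no inner vertex of $Q_n$ lies in $L$ at all is not justified by what you proved (a vertex of $L$ could appear on $Q_n$ without appearing on any other $Q_m$, since on later paths it might sit outside the segment between $\ell_1$ and $\ell_2$), but it is also not needed: independence of the $Q_n$ as $\ell_1$--$\ell_2$ paths is all the contradiction requires. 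Your reduction of non-wildness to non-density via Cantor's theorem, and the deduction of the ``in particular'' clause from the subsequence remark preceding the lemma, both match the paper.
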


In the proof we use the following properties of grain lines.
Given a grain line $(L,\cP)$ we say that a path $P_n$ does $(L,\cP)$-\emph{grain} 
a set $U$ of vertices if, for all $m\ge n$, we have $V(P_m)\cap U=L\cap U$ and the path $P_m$ induces the same linear ordering on this intersection as $L$ does.
If $(L,\cP)$ is clear from context, we also say that $P_n$ \emph{grains} $U$.
Every path $P_n$ grains the union $\bigcup_{k<n}V(P_k)$ by~\ref{GLgrain}.
And for every finite vertex set $X$ there is a number $n\in\N$ such that $P_n$ grains $X$.
We will use this latter property frequently in the proofs to come.

\begin{proof}[{Proof of Lemma~\ref{wildGLinPigraph}}]
Suppose that $(L,\cP)$ is any grain line in some given subdivided \Pigraph ~$G$.
It suffices to show that $(L,\cP)$ is wild.
For this, consider any two elements $\ell_1,\ell_2\in L$ with $\ell_1<_L\ell_2$.
Then $\ell_1$ and $\ell_2$ must have infinite degree in~$G$; in particular, $\ell_1$ and $\ell_2$ must be branch vertices of $G$.
Since $G$ is a subdivided \Pigraph , we find a finite vertex set $S\subset V(G)\setminus\{\ell_1,\ell_2\}$ that separates $\ell_1$ and $\ell_2$ in $G-\ell_1\ell_2$ (the edge $\ell_1\ell_2$ need not exist).
Then we pick $N\in\N$ such that $P_N$ avoids the edge $\ell_1\ell_2$ and grains the finite vertex set $S\cup\{\ell_1,\ell_2\}$.
Now $\ell_1 P_N\ell_2$ must meet $S$ in a vertex $s$, and then $P_N$ graining $S\cup\{\ell_1,\ell_2\}$ implies $s\in L$ with $\ell_1<_L s<_L\ell_2$ as desired.
\end{proof}

Grain lines can be restricted such that the restriction is again a grain line, and restricting a grain line preserves wild presentations:

\begin{lemma}\label{grainLineRestriction}
If $(L,\cP)$ is a grain line with $\ell_1<_L\ell_2$ and $N\in\N$ is such that $P_N$ grains $\{\ell_1,\ell_2\}$, then $([\ell_1,\ell_2]_L,(\ell_1 P_n\ell_2)_{n\ge N})$ is an $\ell_1$--$\ell_2$ grain line that is wildly presented if $(L,\cP)$ is.
\end{lemma}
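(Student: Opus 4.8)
The plan is to verify the three defining conditions (GL1)–(GL3) for the pair $([\ell_1,\ell_2]_L,(\ell_1 P_n\ell_2)_{n\ge N})$ directly, and then to check that the wild-presentation property is inherited. Throughout, I write $L':=[\ell_1,\ell_2]_L$ and $P'_n:=\ell_1 P_n\ell_2$ for $n\ge N$, with the linear order on $L'$ being the restriction of ${\le}_L$. Note first that each $P'_n$ is a genuine $\ell_1$--$\ell_2$ path: since $P_N$ grains $\{\ell_1,\ell_2\}$ and $N\le n$, both $\ell_1$ and $\ell_2$ lie on $P_n$ in the order induced by $L$, so $\ell_1 P_n\ell_2$ is well-defined with the correct endvertices. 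The paths $P'_n$ are pairwise edge-disjoint because the $P_n$ are. The key simplifying observation, used repeatedly, is that $V(P'_n)\cap L = V(P_n)\cap L'$: one inclusion is clear, and for the other, if $v\in V(P_n)\cap L'$ then $\ell_1\le_L v\le_L \ell_2$; since $P_N$ (hence $P_n$) grains $\{\ell_1,\ell_2\}$, the vertex $v$, if it lay on $P_n$ outside the segment between $\ell_1$ and $\ell_2$, would force $\ell_1,\ell_2$ into an order on $P_n$ incompatible with their $L$-order — wait, more carefully: I would argue that on $P_n$ the subpath from $\ell_1$ to $\ell_2$ contains exactly the vertices of $V(P_n)$ lying weakly between $\ell_1$ and $\ell_2$ in the $P_n$-order, and by \ref{GLgrain} applied to $L_{<n}\supseteq$ (enough of $L'$) the $P_n$-order and $L$-order agree on the relevant vertices; the cleanest route is to note that for $v\in L$, membership of $v$ in the $\ell_1$--$\ell_2$ subpath of $P_n$ is determined by the $P_n$-order of $\ell_1,v,\ell_2$, which equals their $L$-order since $P_n$ grains $\{\ell_1,\ell_2\}$ and, being in $L$, $v$ lies on all but finitely many $P_k$ so it eventually gets grained too — but for a fixed $n$ I only need that $v\in L$ forces consistency, which follows once I pick the right $N$; since $P_N$ grains $\{\ell_1,\ell_2\}$ this is exactly what graining gives. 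I would write this out as a short lemma-internal claim.

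Next I verify \ref{GLinL} for the restricted grain line: I must show $L' = \{\,v\mid \{n\ge N: v\in V(P'_n)\}\text{ is a final segment of }[N,\infty)\,\}$. If $v\in L'$, then $v\in L$, so $v$ lies on $P_n$ for all $n$ past some threshold; combined with $v$ lying between $\ell_1$ and $\ell_2$ in the $L$-order and the graining of $\{\ell_1,\ell_2\}$, $v$ lies on $P'_n$ for all large $n$. Conversely if $v$ lies on $P'_n$ for all large $n$, then $v$ lies on at least two $P_n$'s (any two large ones) with $n\ne m$, so by the characterisation of $L$ noted after the definition, $v\in L$; and since $v\in V(P'_n)\subseteq V(P_n)$ with $P_n$ graining $\{\ell_1,\ell_2\}$, the $L$-order puts $v$ between $\ell_1$ and $\ell_2$, so $v\in L'$. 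Condition \ref{GLninL} is immediate: if a vertex of $P'_n$ is not in $L'$, then — using $V(P'_n)\cap L = V(P_n)\cap L'$ — it is a vertex of $P_n$ not in $L$, hence by \ref{GLninL} for $(L,\cP)$ it lies on no other $P_m$, a fortiori on no other $P'_m$. For \ref{GLgrain} I note that $L'_{<n}$ (computed within the restricted grain line, for $n>N$) equals $L'\cap\bigcup_{N\le k<n}V(P'_k)$, and I claim this is contained in $L\cap\bigcup_{k<n}V(P_k) = L_{<n}$; since $P_n$ induces the same order as $L$ on $L_{<n}$ by \ref{GLgrain} for $(L,\cP)$, and both $L'$ and each $P'_n$ carry restricted orders, $P'_n$ and $L'$ agree on $L'_{<n}$.

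Finally, wild presentation: suppose $(L,\cP)$ is wildly presented, and let $\ell_1\le_L m_1<_L m_2\le_L \ell_2$ with $m_1,m_2\in L'_{<n}$ for some $n>N$. Then $m_1,m_2\in L_{<n}$, so by wild presentation of $(L,\cP)$ the subpath $\mathring m_1 P_n\mathring m_2$ has a vertex $w$ in the open interval $(m_1,m_2)_L$. Since $m_1,m_2\in L'=[\ell_1,\ell_2]_L$, the open interval $(m_1,m_2)_L$ is contained in $L'$, so $w\in(m_1,m_2)_{L'}$; and $\mathring m_1 P_n\mathring m_2 = \mathring m_1 P'_n\mathring m_2$ because $m_1,m_2$ both lie on $P'_n$ (being in $L'_{<n}\subseteq V(P'_n)$ up to the usual graining argument, or more simply in $L'\subseteq$ the vertex set of every late $P'_n$ and in $L'_{<n}$ by hypothesis). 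Hence $(L',(P'_n)_{n\ge N})$ is wildly presented.

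\textbf{Main obstacle.} The routine-looking but genuinely load-bearing step is the identity $V(P'_n)\cap L = V(P_n)\cap L'$ and, relatedly, that cutting $P_n$ down to its $\ell_1$--$\ell_2$ subpath does not disturb the $L$-order structure — i.e. that $\ell_1 P_n\ell_2$ really is the set of $P_n$-vertices lying $L$-between $\ell_1$ and $\ell_2$, together with the off-$L$ detours, and that these detours are exactly the ones \ref{GLninL} promises are private to $P_n$. This is precisely where the hypothesis ``$P_N$ grains $\{\ell_1,\ell_2\}$'' is used, and everything else is bookkeeping. I would isolate this as the one claim to prove carefully and treat the verifications of \ref{GLinL}–\ref{GLgrain} and wild presentation as short consequences.
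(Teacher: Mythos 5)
Your overall architecture (verify \ref{GLinL}--\ref{GLgrain} for the restricted pair, then inherit the wild presentation) matches the paper's, but the ``key simplifying observation'' $V(\ell_1P_n\ell_2)\cap L=V(P_n)\cap[\ell_1,\ell_2]_L$ on which you hang everything is false for individual $n$, and your own hedging (``wait, more carefully \ldots which follows once I pick the right $N$'') never actually repairs it. The point is that \ref{GLgrain} controls the position on $P_n$ only of vertices in $L_{<n}$, not of all of $V(P_n)\cap L$: a vertex $v\in L$ that appears for the \emph{first} time on $P_n$ may be placed anywhere on $P_n$, regardless of its $L$-position relative to $\ell_1$ and $\ell_2$. (This is exactly the phenomenon the paper flags in Example~\ref{WhirlExample} and in the remark after the definition of a grain line.) So there can be $v\in L$ with $\ell_2<_Lv$ lying on $\ell_1P_n\ell_2$, and conversely $v\in[\ell_1,\ell_2]_L$ lying on $P_n$ but outside $\ell_1P_n\ell_2$. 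Choosing $N$ larger cannot help, since for every $n$ there may be new such vertices. ``$P_n$ grains $\{\ell_1,\ell_2\}$'' tells you only where $\ell_1$ and $\ell_2$ sit on $P_n$; to locate $v$ you need $P_n$ to grain $\{\ell_1,v,\ell_2\}$, which holds only for $n$ beyond the first index at which $v$ appears.

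This gap propagates into your verification of \ref{GLninL}: from ``$v\in\ell_1P_n\ell_2$ and $v\notin[\ell_1,\ell_2]_L$'' you conclude $v\notin L$ and invoke \ref{GLninL} for $(L,\cP)$, but the case $v\in L\setminus[\ell_1,\ell_2]_L$ genuinely occurs and needs its own argument. The paper's proof supplies it: take $k$ minimal with $v\in\ell_1P_k\ell_2$; for $n>k$ the path $P_n$ grains $V(P_k)\ni v,\ell_2$, so (say) $\ell_2$ precedes $v$ on $P_n$ and hence $v\notin\ell_1P_n\ell_2$. The same ``eventual'' device fixes your \ref{GLinL} argument, where you twice justify a claim about the $L$-position of $v$ by graining of $\{\ell_1,\ell_2\}$ alone: instead, pick $n$ in the relevant final segment large enough that $P_n$ grains $\{\ell_1,v,\ell_2\}$. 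Your treatment of \ref{GLgrain} and of the wild presentation is fine (there you correctly work inside $L'_{<n}\subset L_{<n}$, where \ref{GLgrain} for $(L,\cP)$ really does apply); it is only the pointwise identity, and the two places you lean on it, that must be replaced by the minimal-first-appearance argument.
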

\begin{proof}
First, we show that $([\ell_1,\ell_2]_L,(\ell_1 P_n\ell_2)_{n\ge N})$ is an $\ell_1$--$\ell_2$ grain line.

\ref{GLinL} We have to show the equality
\begin{align*}
[\ell_1,\ell_2]_L=\Big\{\,v\;\Big\vert\;\{\,n\in\N_{\ge N}:v\in V(\ell_1 P_n\ell_2)\,\}\text{ is a final segment of }\N_{\ge N}\,\Big\}.
\end{align*}
We start with the backward inclusion.
If a vertex $v$ lies on $\ell_1 P_n\ell_2$ for all $n$ in some final segment of $\N_{\ge N}$ then it lies in $L$ by \ref{GLninL} for $(L,\cP)$, and in particular it also lies on $\ell_1 P_n\ell_2$ when $P_n$ does $(L,\cP)$-grain $\{\ell_1,v,\ell_2\}$ so $v\in [\ell_1,\ell_2]_L$ follows.
Conversely, if $v$ is a vertex in $[\ell_1,\ell_2]_L$ and $k\ge N$ is minimal with $v\in\ell_1 P_k\ell_2$, then $P_{k+1}$ does $(L,\cP)$-grain $\{\ell_1,v,\ell_2\}$.
Therefore, $v$ is contained in $\ell_1 P_n\ell_2$ for all $n\ge k$, and hence $\N_{\ge k}$ witnesses that $v$ is contained in the right hand side of the equation.

\ref{GLninL} Consider any vertex $v\in (\bigcup_{n\ge N}\ell_1 P_n\ell_2)-[\ell_1,\ell_2]_L$ and let $k\ge N$ be minimal such that $\ell_1 P_k\ell_2$ contains $v$.
If $v$ is not contained in $L$, then $P_k$ is the only path from $\cP$ containing $v$, and hence $\ell_1 P_k\ell_2$ is the only path from $(\ell_1 P_n\ell_2)_{n\ge N}$ containing $v$.
Otherwise $v$ is contained in $L\setminus [\ell_1,\ell_2]_L$ so, say, $\ell_2<_L v$.
Then, as $P_n$ with $n>k$ does $(L,\cP)$-grain $V(P_k)$, the vertex $\ell_2$ precedes $v$ on $P_n$, giving $v\notin\ell_1 P_n\ell_2$ as desired.

\ref{GLgrain} 
Consider any $n\ge N$ and write $L'_{<n}:=[\ell_1,\ell_2]_L\cap\bigcup_{k=N}^{n-1}V(\ell_1 P_k\ell_2)$.
By the already shown \ref{GLinL} we have $L'_{<n}\subset V(\ell_1 P_n\ell_2)$, so $\ell_1 P_n\ell_2$ does induce a linear ordering on $L'_{<n}$, and it coincides with the linear ordering induced by $[\ell_1,\ell_2]_L$ by \ref{GLgrain} for $(L,\cP)$.

Therefore, $([\ell_1,\ell_2]_L,(\ell_1 P_n\ell_2)_{n\ge N})$ is an $\ell_1$--$\ell_2$ grain line; now we show that it is wildly presented if $(L,\cP)$ is.
For this consider any $n\ge N$ with some two elements $\ell<_L\ell'$ of $L'_{<n}$.
Then, as $(L,\cP)$ is wildly presented and $L'_{<n}\subset L_{<n}$, the subpath $\mathring{\ell}P_n\mathring{\ell}'$ of $\ell_1 P_n\ell_2$ has a vertex in $(\ell,\ell')_L$.
\end{proof}

\section{Proof of the main result}\label{sec:FinalProof}

\noindent In this section, we employ our results on grain lines to prove Theorem~\ref{GrainLineSplit}, which we then use to prove Theorems~\ref{Mainresult} and~\ref{FGfinBranchSets}.


\begin{theorem}\label{GrainLineSplit}
Suppose that $G$ is any subdivided \Pigraph\ and that $u,v$ are two distinct branch vertices of $G$.
Then there exist subgraphs $H_u,H_v\subset G$ whose vertex sets have non-empty intersection~$X$, such that the following conditions are satisfied:
\begin{enumerate}
\item $H_u[X]=H_v[X]$ and both induced subgraphs are finite and connected;
\item $X$ avoids $u$ and $v$;
\item both $H_u/X$ and $H_v/X$ are subdivided \Pigraph s in which $u,X$ and $v,X$ are branch vertices, respectively;
\item $uX$ is an edge of $H_u/X$ and $vX$ is an edge of $H_v/X$.
\end{enumerate}
\end{theorem}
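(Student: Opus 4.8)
The plan is to realise the splitting of $G$ by collapsing an initial piece of a grain line between $u$ and $v$.

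Since $u$ and $v$ are branch vertices of the \iecon\ graph $G$, there are infinitely many edge-disjoint $u$--$v$ paths in $G$, so Theorem~\ref{GrainLineExistence} yields a $u$--$v$ grain line, which by Lemma~\ref{wildGLinPigraph} may be taken to be wildly presented, say $(L,\cP)$, with $L$ order-isomorphic to $\Q\cap[0,1]$. Fix $w\in L$ with $u<_L w<_L v$ and let $N$ be the least index with $w\in V(P_N)$. Write $P_N=u\,a_1\cdots a_k\cdots v$ with $a_k=w$, so that $uP_Nw=u\,a_1\cdots a_k$; let $b$ be the second-to-last vertex of $P_N$. Put
\[
X:=V(P_N)\setminus\{u,v\}.
\]
Then $X$ is finite, non-empty (it contains $w$), it avoids $u$ and $v$, it induces in $G$ the connected path $P_N-u-v$, and it contains the neighbour $a_1$ of $u$ and the neighbour $b$ of $v$; moreover $ua_1$ and $vb$ lie on no $P_n$ with $n\neq N$ by edge-disjointness of $\cP$.

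Now fix any $N'>N$. By Lemma~\ref{grainLineRestriction} the sequences $([u,w]_L,(uP_nw)_{n\ge N'})$ and $([w,v]_L,(wP_nv)_{n\ge N'})$ are again wildly presented grain lines. Let $H_u$ be the subgraph of $G$ with edge set $E\big(\bigcup_{n\ge N'}uP_nw\big)\cup E\big(G[X]\big)\cup\{ua_1\}$, and symmetrically let $H_v$ have edge set $E\big(\bigcup_{n\ge N'}wP_nv\big)\cup E\big(G[X]\big)\cup\{vb\}$. Adding $G[X]$ to both forces $H_u[X]=G[X]=H_v[X]$, which is finite and connected, giving~(1). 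The only vertices shared by an initial segment $uP_nw$ and a terminal segment $wP_mv$ are those lying in $L$ both $\le_L w$ and $\ge_L w$ (by~\ref{GLninL}), i.e.\ $w$ alone; since all of $X$ lies on $P_N$ while $u\notin H_v$ and $v\notin H_u$, one checks $V(H_u)\cap V(H_v)=X$. As $X$ avoids $u,v$ this is~(2). Finally $ua_1$ is an edge of $H_u$ with $a_1\in X$, so $uX\in E(H_u/X)$, and likewise $vX\in E(H_v/X)$, which is~(4).

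For~(3) I treat $H_u$; the case of $H_v$ is symmetric. Contracting $X$ collapses $uP_Nw$ to the single edge $uX$, so $u\sim X$ in $H_u/X$. The vertex $u$ has infinitely many neighbours, all but finitely many outside the finite set $X$, so $u$ keeps infinite degree; and the vertices just before $w$ on the paths $uP_nw$ $(n\ge N')$ are pairwise distinct and outside $X$ (by wild presentation, since a vertex adjacent to $w$ and consecutive with $w$ among the $L$-vertices of $P_n$ must be ``freshly born'' at level $n>N$, hence not on $P_N$), so $X$ also has infinite degree; thus $u$ and $X$ are branch vertices of $H_u/X$ joined by an edge. Infinite edge-connectivity of $H_u/X$ follows with $X$ as a hub: for branch vertices $z,z'$ take infinitely many edge-disjoint $z$--$z'$ paths of $G$, discard the parts outside $H_u$, and contract $X$; each resulting edge-disjoint walk still joins $z$ to $z'$ because every excursion outside $H_u$ left and re-entered through $X$. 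That $H_u/X$ has no two vertices joined by infinitely many independent paths is inherited from $G$: $H_u/X$ is a minor of $G$ whose only non-trivial branch set is $X$, and at most one of a family of independent paths can meet the contracted vertex, so infinitely many would lift to independent paths of $G$ emanating (after a pigeonhole step on the finite set $X$) from a single vertex. It remains to see that suppressing the degree-$2$ vertices of $H_u/X$ gives a \emph{simple} graph, hence a genuine \Pigraph: this is exactly where wild presentation enters, since two $L$-vertices are non-consecutive among the $L$-vertices of $P_n$ once $n$ exceeds the indices at which they appear, which collapses all edge-multiplicities in the suppression to $1$ apart from the edge $uX$ that we installed by hand, and there are no loops because the private vertices of each $P_n$ lie on $P_n$ alone. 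Thus $H_u/X$ is a subdivided \Pigraph\ with branch vertices $u$ and $X$.

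\textbf{Main obstacle.} The delicate point is the bookkeeping of the last paragraph: $X$ must be chosen \emph{large} enough to touch both $u$ and $v$ yet \emph{small} enough that $H_u$ and $H_v$ still meet only in $X$, and one must exploit wild presentation --- possibly together with a careful choice of $w$ and $N'$, with the removal of a few edges incident to $u$ (resp.\ $v$), or a passage to a subsequence of $\cP$ --- to exclude parallel edges from the suppressions of $H_u/X$ and $H_v/X$, so that these are honest subdivided \Pigraph s rather than merely \iecon-looking multigraphs. Everything else is routine verification from the grain-line axioms \ref{GLinL}--\ref{GLgrain} and Lemmas~\ref{wildGLinPigraph} and~\ref{grainLineRestriction}.
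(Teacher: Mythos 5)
Your overall strategy---take a wildly presented grain line from $u$ to $v$, contract a finite connected piece near the middle, and split the remaining paths into initial and terminal segments---is the same as the paper's, but the particular cut you make does not work, and the failure is exactly at the point you flag as delicate. You set $X:=V(P_N)\setminus\{u,v\}$ and split each $P_n$ ($n\ge N'$) at the single vertex $w$, claiming via \ref{GLninL} that $uP_nw$ and $wP_mv$ can share only vertices of $L$ that are both $\le_L w$ and $\ge_L w$. But \ref{GLninL} concerns only vertices outside $L$, and \ref{GLgrain} forces $P_n$ to agree with $L$ only on $L_{<n}$, not on all of $V(P_n)\cap L$: a vertex $\ell\in L$ that first appears on $P_n$ may precede $w$ on $P_n$ even though $\ell>_L w$ (the paper warns about exactly this right after the definition and in Example~\ref{WhirlExample}, where it must happen infinitely often). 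Such an $\ell$ lies on $uP_nw\subset H_u$, and for every $m>n$ it lies in $L_{<m}$, so $P_m$ places it after $w$ and hence $\ell\in wP_mv\subset H_v$; yet $\ell\notin V(P_N)$, so $\ell\notin X$. Thus $V(H_u)\cap V(H_v)\neq X$ and (i) fails. The paper avoids this by taking $X:=V(\mathring{P_0})$ and cutting each $P_n$ not at a common vertex $w$ but at $x_u$ and $x_v$, the least and greatest elements of $L\cap V(\mathring{P_0})$: since every $P_n$ with $n\ge 1$ grains $V(P_0)$, the vertex $x_u$ is the \emph{first} vertex of $P_n$ in $\mathring{P_0}$, so $uP_nx_u$ is a genuine $u$--$\mathring{P_0}$ path; in addition a finite $u$--$v$ separator $S$ grained by $P_0$ confines the two families of segments to disjoint sides of $G-S$, which is what really pins $V(H_u)\cap V(H_v)$ down to $\mathring{P_0}$.

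A second genuine error is your argument that $H_u/X$ is \iecon\ ``with $X$ as a hub'': an excursion of a $z$--$z'$ path of $G$ outside $H_u$ need not leave and re-enter through $X$, since $X$ does not separate $H_u$ from the rest of $G$---the path can exit through $u$ or through any interior vertex of some $uP_nw$---so discarding the parts outside $H_u$ and contracting $X$ leaves disconnected fragments. In the paper, (iii) is instead read off from the restricted grain lines themselves: because each $uP_nx_u$ meets $X$ exactly once (in its last vertex), $H_u/X$ is the union of the paths of a wildly presented $u$--$X$ grain line, and the dense limit order makes this a subdivided \Pigraph\ with branch vertices $u$ and $X$. Your worry about loops and parallel edges in the suppression likewise disappears under that choice of cut, since no path revisits $X$ after contraction.
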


\begin{proof}
Without loss of generality we may assume that $uv$ is not an edge of $G$.
Using that $G$ is a subdivided \Pigraph\ we find a finite vertex set $S\subset V(G)\setminus\{u,v\}$ that separates $u$ and $v$ in $G$.
We write $C_u$ and $C_v$ for the two distinct components of $G-S$ that contain $u$ and $v$ respectively.
Next, we use Theorem~\ref{GrainLineExistence} and Lemma~\ref{wildGLinPigraph} to find a wildly presented $u$--$v$ grain line $(L,\cP)$ in $G$.
Without loss of generality we may assume that $P_0$ grains the finite vertex set $S$.
We let $s_u$ be the first vertex of the $u$--$v$ path $P_0$ in $S$, and we let $s_v$ be the last vertex of $P_0$ in $S$.
That is to say that $s_u$ and $s_v$ are the least and greatest vertex of $L$ in~$S$.
Then, for all $n\in\N$, the paths $uP_n s_u$ and $s_vP_n v$ are contained in $G[C_u+s_u]$ and $G[s_v+C_v]$ respectively.

Next, we let $x_u$ and $x_v$ be the least and greatest vertex of $L$ in $V(\mathring{P_0})$. 
Moreover, we let $L_u:=[u,x_u]_L$ and $\cP_u:=(u P_n x_u)_{n\ge 1}$, and we let $L_v:=[x_v,v]_L$ and $\cP_v:=(x_v P_n v)_{n\ge 1}$.
Then $(L_u,\cP_u)$ and $(L_v,\cP_v)$ are wildly presented \mbox{$u$--$x_u$} and \mbox{$x_v$--$v$} grain lines in $G$ by Lemma~\ref{grainLineRestriction}.
We claim that $H_u:=P_0\mathring{v}\cup\bigcup\cP_u$ and $H_v:=\mathring{u}P_0\cup\bigcup\cP_v$ are the desired subgraphs.

First, we show that $X=V(\mathring{P_0})$ and that $X$ satisfies~(i), (ii) and~(iv).
For this, it suffices to show that for every $n\ge 1$ the paths $u P_n x_u$ and $x_v P_n v$ are $u$--$\mathring{P_0}$ and $\mathring{P_0}$--$v$ paths in $G[C_u+s_u]$ and $G[s_v+C_v]$, respectively.
The vertex $s_u\in L\cap S\subset L\cap V(\mathring{P_0})$ was a candidate for~$x_u$, implying $x_u\le_L s_u$, and then for all $n\ge 1$ the path $P_n$ graining $V(P_0)$ gives $u P_n x_u\subset u P_n s_u\subset G[C_u+s_u]$ on the one hand and that $x_u$ is the first vertex of $P_n$ in $\mathring{P_0}$ on the other hand; for the paths $x_vP_n v$ we employ symmetry.

(iii) follows from the facts that $(L_u,\cP_u)$ and $(L_v,\cP_v)$ are wildly presented and that all paths $u P_n x_u$ and $x_v P_n v$ ($n\ge 1$) are $u$--$\mathring{P_0}$ and $\mathring{P_0}$--$v$ paths respectively.
\end{proof}

Now we have almost all we need to prove Theorems~\ref{Mainresult} and~\ref{FGfinBranchSets}.
In the proof of Theorem~\ref{FGfinBranchSets}, we will face the construction of a minor with finite branch sets in countably many steps.
The following notation and lemma will help us to keep the technical side of this construction to the minimum.

Suppose that $G$ and $H$ are two graphs with $H$ a minor of $G$.
Then there are a vertex set $U\subset V(G)$ and a surjection $f\colon U\to V(H)$ such that the preimages $f^{-1}(x)\subset U$ form the branch sets of a model of $H$ in $G$.
A \emph{minor-map} $\varphi\colon G\rminor H$ formally is such a pair $(U,f)$.
Given $\varphi=(U,f)$ we address $U$ as $V(\varphi)$ and we write $\varphi=f$ by abuse of notation.
Usually, we will abbreviate `minor-map' as `map'.

\begin{lemma}\label{limitMinor}
Let $G_0,G_1,\ldots$ and $H_0\subset H_1\subset\cdots$ be two sequences of graphs \mbox{$H_n\subset G_n$} with maps $\varphi_n\colon G_n\rminor G_{n+1}$ such that for every vertex $x\in G_{n+1}$ the preimage $\varphi_n^{-1}(x)$ is finite if $x\notin H_{n}$ and equal to $\{x\}$ if $x\in H_{n}$.
Then $G_0$ contains $\bigcup_{n\in\N}H_n$ as a minor with finite branch sets.
\end{lemma}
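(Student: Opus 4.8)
The plan is to compose the given minor-maps and pass to a limit; throughout, $G:=G_0$ and $H:=\bigcup_{n\in\N}H_n$. First I would record that minor-maps compose: if $\alpha\colon A\rminor B$ and $\beta\colon B\rminor C$ are minor-maps, then the assignment $z\mapsto\bigcup_{y\in\beta^{-1}(z)}\alpha^{-1}(y)$ for $z\in V(C)$ is a minor-map $\beta\circ\alpha\colon A\rminor C$ whose branch set at $z$ is this union: connectedness of the new branch sets follows from connectedness of $B[\beta^{-1}(z)]$ together with connectedness of the sets $\alpha^{-1}(y)$ and the edges of $A$ witnessing the edges of $B[\beta^{-1}(z)]$, and every edge of $C$ is witnessed through $B$ and then through $A$. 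Setting $\psi_0$ to be the identity map on $G$ and $\psi_{n+1}:=\varphi_n\circ\psi_n$, I obtain minor-maps $\psi_n\colon G\rminor G_n$ with branch sets $\psi_n^{-1}(x)=\varphi_0^{-1}\bigl(\varphi_1^{-1}(\cdots\varphi_{n-1}^{-1}(x)\cdots)\bigr)$ for $x\in V(G_n)$.

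Next I would show that, along $H$, these branch sets stabilise and stay finite. Fix $x\in V(H)$ and pick $N\in\N$ with $x\in V(H_N)$; as the $H_n$ are nested, $x\in V(H_n)$ and hence $\varphi_n^{-1}(x)=\{x\}$ for every $n\ge N$. Therefore $\psi_{n+1}^{-1}(x)=\psi_n^{-1}\bigl(\varphi_n^{-1}(x)\bigr)=\psi_n^{-1}(x)$ for all $n\ge N$, so $B_x:=\psi_N^{-1}(x)$ does not depend on the choice of such an $N$. Moreover $B_x$ is finite: by hypothesis each set $\varphi_n^{-1}(y)$ with $y\in V(G_{n+1})$ is finite, being either $\{y\}$ or finite, so $\varphi_n^{-1}$ maps finite sets to finite sets, and feeding $\{x\}\subset V(G_N)$ through $\varphi_{N-1}^{-1},\varphi_{N-2}^{-1},\dots,\varphi_0^{-1}$ in turn — a downward induction over the levels $N,N-1,\dots,0$ — shows that $B_x=\psi_N^{-1}(x)$ is finite.

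It then remains to check that $(B_x)_{x\in V(H)}$ is the branch-set family of a model of $H$ in $G$. Given two distinct vertices $x,x'\in V(H)$, or an edge $xx'$ of $H$, one may pick $N$ large enough that $x,x'\in V(H_N)$ and, using nestedness, $xx'\in E(H_N)$ if $xx'\in E(H)$; then $B_x=\psi_N^{-1}(x)$ and $B_{x'}=\psi_N^{-1}(x')$, and since $\psi_N\colon G\rminor G_N$ is a minor-map these two sets are non-empty, disjoint, and induce connected subgraphs of $G$, while an edge $xx'$ of $H_N\subset G_N$ forces an edge of $G$ between $B_x$ and $B_{x'}$. Thus $U:=\bigcup_{x\in V(H)}B_x$ together with the map $f\colon U\to V(H)$ sending each $w$ to the unique $x$ with $w\in B_x$ is a minor-map $G\rminor H$ whose branch sets are finite, which is the assertion.

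I do not expect a genuine obstacle here; the whole content is the bookkeeping of iterated compositions. The one point that needs a little care is the stabilisation step: the observation that a vertex's branch set stops changing the moment the vertex enters some $H_N$ is what simultaneously pins the branch set down and bounds its size. A further minor subtlety is that the $H_n$ need only be subgraphs, not induced subgraphs, of the $G_n$, but this is harmless, since an edge of $H_n$ is still an edge of $G_n$, which is all the edge condition of a model requires.
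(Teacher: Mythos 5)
Your proof is correct and complete. The paper itself does not write out an argument here --- it simply points to the proof of Lemma~5.12 in the cited paper on typical infinitely edge-connected graphs --- so there is no in-text proof to compare against; but your argument (compose the minor-maps $\varphi_n$ into maps $\psi_n\colon G_0\rminor G_n$, observe that the branch set of a vertex $x$ stabilises and stays finite from the first $N$ with $x\in H_N$ onwards because $\varphi_n^{-1}(x)=\{x\}$ for all $n\ge N$, and then verify disjointness, connectedness and the edge condition by choosing $N$ large enough for any given pair of vertices or edge of $\bigcup_n H_n$) is exactly the standard argument that the citation is standing in for. You correctly identify the stabilisation step as the crux, and your closing remark that the $H_n$ need only be subgraphs rather than induced subgraphs is accurate and harmless, as you say.
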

\begin{proof}
The proof of~\cite{TypicalInfinitelyEdgeconnectedGraphs}*{Lemma~5.12} shows this.
\end{proof}


\begin{proof}[{Proof of Theorem~\ref{FGfinBranchSets}}]
Let $G$ be any \Pigraph . We have to find a Farey graph minor in $G$ with finite branch sets.
By Lemma~\ref{FareyGisMinorOfHalvedFareyG} it suffices to find a halved Farey graph minor with finite branch sets in $G$.

Call a graph a \emph{foresighted} halved Farey graph of order $n\in\N$ if it is the edge-disjoint union of $\hFG_n$ with subdivided \Pigraph s $A_{uv}$, one for every blue edge $uv\in \hFG_n$, such that:
\begin{itemize}[label={\textbf{--}}]
    \item each $A_{uv}$ meets $\hFG_n$ precisely in $u$ and $v$ but $uv\notin A_{uv}$;
    \item $u$ and $v$ are branch vertices of $A_{uv}$;
    \item every two distinct $A_e$ and $A_{e'}$ meet precisely in the intersection $e\cap e'$ of their corresponding edges (viewed as vertex sets).
\end{itemize}

\noindent To find a halved Farey graph minor with finite branch sets in $G$, it suffices by Lemma~\ref{limitMinor} to find a sequence $G=:G_0,G_1,\ldots$ of foresighted halved Farey graphs of orders $0,1,\ldots$ with maps $\varphi_n\colon G_n\rminor G_{n+1}$ such that $\varphi_n^{-1}(x)$ is finite for all $x\in G_{n+1}-\hFG_{n}$ and $\varphi_n^{-1}(x)=\{x\}$ for all $x\in\hFG_{n}$.

To get started, pick any edge $e$ of $G$, and note that $G=G_0$ is a foresighted halved Farey graph of order $0$ with $A_e=G-e$ when we rename $e$ to the edge of which $\hFG_0=K^2$ consists.

At step $n+1$ suppose that we have already constructed $G_n\supset\hFG_n$ and consider the subdivided \Pigraph s $A_e$ that were added to $\hFG_n$ to form~$G_n$.
Theorem~\ref{GrainLineSplit} yields in each $A_e$ two subgraphs $H_u^e,H_v^e$ for $e=uv$ whose vertex sets have non-empty intersection~$X^e$, such that the following conditions are satisfied:
\begin{enumerate}
    \item $H_u^e[X^e]=H_v^e[X^e]$ and both induced subgraphs are finite and connected;
    \item $X^e$ avoids $u$ and $v$;
    \item both $H_u^e/X^e$ and $H_v^e/X^e$ are subdivided \Pigraph s in which $u,X^e$ and $v,X^e$ are branch vertices, respectively;
    \item $uX^e$ is an edge of $H_u^e/X^e$ and $vX^e$ is an edge of $H_v^e/X^e$.
\end{enumerate}
We obtain $G_{n+1}$ from $G_n$ by replacing each $A_e$ with the minor $(H_u^e\cup H_v^e)/X^e$ and renaming the vertices $X^e$ to~$v_e$ (recall that $v_e$ is the vertex of~$\hFG_{n+1}-\hFG_n$ that arises from $e=uv\in\hFG_n$ in the recursive definition of~$\hFG_{n+1}$).
Then all triples $u,v,v_e$ span a triangle in $G_{n+1}$, giving $\hFG_{n+1}\subset G_{n+1}$, and the map $\varphi_n\colon G_n\rminor G_{n+1}\supset\hFG_{n+1}$ gets defined as desired (the only non-trivial fibres are of the form~$\varphi_n^{-1}(v_e)=X^e$, and all sets $X^e$ are finite).
To see that $G_{n+1}$ is a halved Farey graph of order $n+1$, note that $G_{n+1}$ arises from $\hFG_{n+1}$ by adding for all blue edges $uv_e$ and $v_e v$ of $\hFG_{n+1}$ the subdivided \Pigraph s $A_{uv_e}$ and $A_{v_e v}$ obtained from $H_u^e/X^e$ and $H_v^e/X^e$ by deleting the edges $uX^e$ and $vX^e$ respectively and renaming the vertex $X^e$ to $v_e$.
This completes the proof.
\end{proof}

\begin{proof}[{Proof of Theorem~\ref{Mainresult}}]
Theorem~\ref{FGfinBranchSets} implies Theorem~\ref{Mainresult}.
\end{proof}

\begin{bibdiv}
\begin{biblist}

\bib{OfficeHoursGroupTheory}{book}{
	author = {M.~Clay and D.~Margalit},
	title = {{Office Hours with a Geometric Group Theorist}},
	year = {2017},
	publisher = {Princeton University Press},
	review={\MR{MR3645425}},
	doi={10.23943/princeton/9780691158662.001.0001},
}

\bib{DiestelBook5}{book}{
	author = {R.~Diestel},
	edition = {5th},
	publisher = {Springer},
	title = {{Graph Theory}},
	year = {2016},
	doi = {10.1007/978-3-662-53622-3}
}

\bib{EngelkingBook}{book}{
	author = {R. Engelking},
	edition = {2nd ed.},
	publisher = {Heldermann Verlag},
    volume = {6},
    address = {Berlin},
	year = {1989},
    series = {Sigma Series in Pure Mathematics},
	title = {{General Topology}},
	review = {\MR{MR1039321}}
}

\bib{hatcher2017topology}{article}{
  title={Topology of numbers},
  author={A.~Hatcher},
  journal={Book in preparation},
  year={2017},
  note={Available \href{https://pi.math.cornell.edu/~hatcher/TN/TNbook.pdf}{online}}
}

\bib{TypicalInfinitelyEdgeconnectedGraphs}{article}{
    title={Every infinitely edge-connected graph contains the Farey graph or $T_{\aleph_0}\!\ast t$ as a minor},
    author={J.~Kurkofka},
    year={2020},
    note={Submitted},
    eprint={2004.06710}
}

\bib{KurkofkaMSc}{article}{
    title={On the tangle compactification of infinite graphs},
    author={J.~Kurkofka},
    year={2017},
    eprint={1908.10212}
}

\bib{OrderCompatiblePaths}{article}{
    title={Ubiquity and the Farey graph},
    author={J.~Kurkofka},
    year={2021},
    journal={European Journal of Combinatorics},
    volume={95},
    pages = {103326},
    doi = {10.1016/j.ejc.2021.103326},
    eprint={1912.02147}
}

\bib{ProfiniteGroups}{book}{
	Author = {L. Ribes and P. Zalesskii},
	Publisher = {Springer},
	Title = {{Profinite Groups}},
	Year = {2010},
	doi = {10.1007/978-3-642-01642-4},
	review = {\MR{MR2599132}}
}



\end{biblist}
\end{bibdiv}
\end{document}